\documentclass[a4paper,fleqn]{cas-sc}

\usepackage[numbers,sort&compress]{natbib}
\usepackage{amssymb}
\usepackage{caption}
\usepackage{tikz}

\usetikzlibrary{shapes.geometric, arrows.meta, calc, positioning, shadows, babel}

\newtheorem{theorem}{Theorem}
\newtheorem{proposition}[theorem]{Proposition}
\newtheorem{lemma}[theorem]{Lemma}
\newdefinition{rmk}{Remark}
\newproof{pf}{Proof}

\begin{document}
\let\WriteBookmarks\relax
\def\floatpagepagefraction{1}
\def\textpagefraction{.001}

\shorttitle{Exact Integration and Invertibility Completeness of the Nonlinear Relaxation Equation}    
\shortauthors{L. Mrazík}  

\title [mode = title]{Exact Integration and Invertibility Completeness of the Nonlinear Relaxation Equation}  

\author{Luk\'{a}\v{s} Mraz\'{i}k}[orcid=0000-0002-8847-5697]
\cormark[1]
\ead{mrazikl@vscht.cz}
\credit{Methodology, Formal analysis, Writing -- original draft, Writing -- review \& editing}

\affiliation{
    organization={Department of Mathematics, Informatics and Cybernetics, University of Chemistry and Technology, Prague},
    addressline={Technick\'{a} 5},
    city={Prague},
    postcode={166 28},
    country={Czechia}
}
\cortext[1]{Corresponding author}

\begin{abstract}
We examine the nonlinear initial value problem $\dot{y} = C(1 - y^\alpha)$ modeling polytropic gas permeation. For rational $\alpha$, we derive the exact primitive via cyclotomic partial fraction decomposition, establishing its real trigonometric representation. By synthesizing Ritt's theorem with Rosenlicht's differential algebraic residue criterion, we prove the exact step response $y(t)$ is an elementary function if and only if $\alpha \in \{1, 2\}$. Admitting the Lambert $W$ function extends closed-form solvability strictly to $\alpha \in \{-1, 1/2\}$. Finally, the asymptotic limit $\alpha \to 0$ converges to a singular dynamics solved by the logarithmic integral. These results constitute a complete classification of all explicit closed-form solutions admitted by this system.
\end{abstract}

\begin{keywords}
 Nonlinear ordinary differential equations \sep Cyclotomic integration \sep Differential algebra \sep Lambert $W$ function \sep Invertibility completeness
\end{keywords}

\maketitle

\section{Introduction and Physical Genesis}
In the macroscopic modeling of compressible fluid flow through rigid porous media, combining mass conservation with Darcy's law for a steady polytropic gas (index $n \in \mathbb{R} \setminus \{0, -1\}$) dictates the rate of mass accumulation in dead-end collection cells \cite{Mrazik2025, Bear1972}. Defining the exponential parameter $\alpha := (n+1)/n$ and the normalized downstream pressure $y(t) := p_{\mathrm{out}}(t)/p_{\mathrm{in}}$, the pressure dynamics reduce to the autonomous initial value problem:
\begin{equation}\label{eq:ode_main}
    \dot{y} = C (1 - y^\alpha), \quad y(0) = 0,
\end{equation}
where the constant $C > 0$ lumps matrix permeability, fluid viscosity, and geometric parameters. We restrict our analysis to the physical domain $y \in [0, 1)$ for $t \ge 0$. Although this equation is easily integrated numerically, exact closed-form inverses are crucial for rigorously benchmarking porous media simulators and directly extracting $C$ from transient experimental data.

\section{Cyclotomic Integration of the Autonomous Initial Value Problem}
Separation of variables in \eqref{eq:ode_main} leads to the integral relation
\begin{equation}\label{eq:primitive_integral}
    F_{\alpha}(y) := \int_0^y \frac{\mathrm{d}s}{1 - s^\alpha} = C t.
\end{equation}
For arbitrary real $\alpha > 0$, the geometric series expansion for $|s| < 1$ yields
\begin{equation}\label{eq:hyper_series}
    F_\alpha(y) = \sum_{m=0}^\infty \int_0^y s^{\alpha m} \, \mathrm{d}s = \sum_{m=0}^\infty \frac{y^{\alpha m + 1}}{\alpha m + 1} = y \, {}_2F_1\left(1, \frac{1}{\alpha}; 1 + \frac{1}{\alpha}; y^\alpha\right),
\end{equation}
where ${}_2F_1$ is the Gauss hypergeometric function \cite{NIST:DLMF}. 

\begin{rmk}[Symmetry of Negative Exponents]
Series convergence requires $\alpha > 0$, but analytical generality is preserved for $\alpha < 0$. Setting $\beta := -\alpha$ yields the algebraic identity $(1 - s^{-\beta})^{-1} = 1 - (1 - s^\beta)^{-1}$, which integrates to $F_{-\beta}(y) = y - F_{\beta}(y)$. Because the primitives differ strictly by the elementary linear term $y$, their differential algebraic properties and invertibility classification remain identical.
\end{rmk}

When $\alpha$ is rational, this transcendental series reduces to finite algebraic and logarithmic terms.

\begin{proposition}[Cyclotomic Decomposition]\label{prop:cyclotomic}
Let $\alpha = A/B \in \mathbb{Q}_{>0}$ with coprime integers $A, B \in \mathbb{N}$. The primitive $F_{\alpha}(y)$ is given in closed form via the substitution $u = y^{1/B}$ by
\begin{equation}\label{eq:decomp_complex}
    F_{\alpha}(y):= \int \frac{\mathrm{d}y}{1 - y^{A/B}} = -\sum_{k=1}^{\lfloor (B-1)/A \rfloor} \frac{B \, u^{B - kA}}{B - kA} - \frac{B}{A} \sum_{k=0}^{A-1} \omega_k^B \operatorname{Log}(u - \omega_k) + C_0,
\end{equation}
where $\omega_k = \exp(2\pi i k / A)$ are the distinct complex $A$-th roots of unity, $\operatorname{Log}$ denotes the principal complex logarithm, and $C_0$ is an integration constant enforcing $F_{\alpha}(0) = 0$.
\end{proposition}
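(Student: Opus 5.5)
The plan is to reduce the integral to a rational function of $u$ and then split it into a polynomial part and a proper part. I would substitute $y = u^B$ with $u = y^{1/B}\ge 0$, which is a smooth bijection of $[0,1)$ onto itself, so $\mathrm{d}y = B u^{B-1}\,\mathrm{d}u$. Since $y^{A/B} = u^A$, this gives
\begin{equation*}
\int \frac{\mathrm{d}y}{1-y^{A/B}} = \int \frac{B\,u^{B-1}}{1-u^A}\,\mathrm{d}u .
\end{equation*}
The integrand is a rational function of $u$, so the proof becomes a polynomial long division followed by a partial fraction decomposition over the $A$-th roots of unity.

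For the polynomial part, I would set $K := \lfloor (B-1)/A\rfloor$ and $r := B-1-KA$, so that $0\le r<A$. Applying the identity $u^{m}/(1-u^A) = -u^{m-A} + u^{m-A}/(1-u^A)$ repeatedly, $K$ times starting from $m=B-1$, gives
\begin{equation*}
\frac{u^{B-1}}{1-u^A} = -\sum_{k=1}^{K} u^{B-1-kA} + \frac{u^{r}}{1-u^A}.
\end{equation*}
Each exponent satisfies $B-1-kA\ge r\ge 0$, so $B-kA\ge 1$ and no logarithm arises from this part. Integrating termwise and multiplying by $B$ produces exactly the first sum $-\sum_{k=1}^{K} B u^{B-kA}/(B-kA)$ in \eqref{eq:decomp_complex}. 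When $K=0$ this sum is empty, which is consistent with the statement.

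For the proper part $u^r/(1-u^A)$ with $r<A$, the denominator has the $A$ distinct simple zeros $\omega_k$. The residue at $\omega_k$ is therefore
\begin{equation*}
\frac{\omega_k^{r}}{-A\,\omega_k^{A-1}} = -\frac{\omega_k^{r+1}}{A}, \qquad \text{using } \omega_k^A=1 .
\end{equation*}
Since $r+1 = B-KA$, we have $\omega_k^{r+1}=\omega_k^{B}$. Hence
\begin{equation*}
\frac{B\,u^r}{1-u^A} = -\frac{B}{A}\sum_{k=0}^{A-1}\frac{\omega_k^B}{u-\omega_k}.
\end{equation*}
Integrating each term as $\operatorname{Log}(u-\omega_k)$ gives the second sum, and the constant $C_0$ is then fixed by requiring $F_\alpha(0)=0$.

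I expect the main obstacle to be justifying that $\frac{\mathrm{d}}{\mathrm{d}u}\operatorname{Log}(u-\omega_k) = 1/(u-\omega_k)$ along the real path $u\in[0,1)$, given the principal branch cut. I would treat the roots in three groups.
\begin{itemize}
\item For $\operatorname{Im}\omega_k\neq 0$, the point $u-\omega_k$ stays in an open half-plane that avoids $(-\infty,0]$, so $\operatorname{Log}$ is holomorphic there.
\item For $\omega_k=-1$ (which occurs when $A$ is even), $u+1>0$, so there is no issue.
\item For $\omega_0=1$, the point $u-1$ lies on the negative real axis, i.e.\ on the cut itself. There $\operatorname{Log}(u-1)=\ln(1-u)+i\pi$, whose imaginary part is constant, so its real derivative is still $1/(u-1)$.
\end{itemize}
The constant $i\pi$ produced by the third case is absorbed into $C_0$. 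Finally, because conjugate roots $\omega_k$ and $\bar\omega_k = \omega_{A-k}$ occur in pairs, their contributions are conjugate, so the whole expression is real; this is what the paper's later trigonometric representation will exploit.
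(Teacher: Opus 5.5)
Your proposal is correct and follows the paper's proof: you substitute $u=y^{1/B}$, split off the polynomial part by Euclidean division, and use partial fractions with residues $-\frac{B}{A}\omega_k^B$. The paper computes these residues from the full numerator $Bu^{B-1}$ rather than from the remainder $Bu^r$, which is equivalent, and your branch-cut check for $\omega_0=1$ (where $u-1$ lies on the cut and contributes a constant $i\pi$ absorbed into $C_0$) is a justification the paper omits.
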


\begin{pf}
Substituting $u = y^{1/B}$ implies $y = u^B$ and $\mathrm{d}y = B u^{B-1} \mathrm{d}u$. The integral transforms into:
\begin{equation}
    \int \frac{\mathrm{d}y}{1 - y^{A/B}} = \int \frac{B u^{B-1}}{1 - u^A} \, \mathrm{d}u.
\end{equation}
If $B - 1 \ge A$, we perform Euclidean polynomial division. Defining the remainder $r := (B-1) \bmod A$, the integrand decomposes into a~polynomial and a~proper rational function:
\begin{equation}
    \frac{B u^{B-1}}{1 - u^A} = -B \sum_{k=1}^{\lfloor (B-1)/A \rfloor} u^{B - 1 - kA} + \frac{B u^r}{1 - u^A}.
\end{equation}
Direct term-by-term integration of this polynomial part yields the first sum in \eqref{eq:decomp_complex}.

The remaining proper rational function possesses $A$ simple poles located precisely at the cyclotomic points $\omega_k = \exp(2\pi i k / A)$ for $k \in \{0, 1, \dots, A-1\}$. The partial fraction expansion over $\mathbb{C}(u)$ takes the form:
\begin{equation}
    \frac{B u^r}{1 - u^A} = \sum_{k=0}^{A-1} \frac{c_k}{u - \omega_k}.
\end{equation}
The complex residues $c_k \in \mathbb{C}$ at these simple poles are evaluated analytically via l'H\^opital's rule:
\begin{equation}
    c_k = \lim_{u \to \omega_k} (u - \omega_k) \frac{B u^{B-1}}{1 - u^A} = \frac{B \omega_k^{B-1}}{-A \omega_k^{A-1}} = -\frac{B}{A} \omega_k^{B-A} = -\frac{B}{A} \omega_k^B,
\end{equation}
where we utilize the identity $\omega_k^A = 1$. Integrating the resulting simple fractions, $\int c_k (u - \omega_k)^{-1} \, \mathrm{d}u = c_k \operatorname{Log}(u - \omega_k)$, yields the logarithmic sum in \eqref{eq:decomp_complex}.
\end{pf}

The complex logarithmic sum from Proposition~\ref{prop:cyclotomic} contains imaginary components. Because the polynomial $1 - u^A$ has strictly real coefficients, its complex roots exist in predictable symmetric arrangements on the unit circle. We can exploit this geometry to eliminate all imaginary components based on the parity of $A$, as illustrated in Figure~\ref{fig:strategy}. Let $L(u)$ denote this logarithmic sum:
\begin{equation}
    L(u) = -\frac{B}{A} \sum_{k=0}^{A-1} \omega_k^B \operatorname{Log}(u - \omega_k).
\end{equation}

\begin{lemma}[Conjugate Symmetry]\label{lem:conjugate_pairing}
The sum $L(u)$ can be reduced to a~strictly real representation:
\begin{equation}
    L(u) = -\frac{B}{A} \sum_{k=0}^{A-1} \left[ C_k(u) + S_k(u) \right],
\end{equation}
where the real functions $C_k$ and $S_k$ are defined via the argument $\varphi_k = 2\pi k / A$ as:
\begin{align}
    C_k(u) &= \frac{1}{2} \cos(B\varphi_k) \ln(u^2 + 1 - 2u\cos\varphi_k), \label{eq:C_k} \\
    S_k(u) &= -\sin(B\varphi_k) \operatorname{atan2}(-\sin\varphi_k, u - \cos\varphi_k). \label{eq:S_k}
\end{align}
\end{lemma}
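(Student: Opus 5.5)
The plan is to expand each term $\omega_k^B \operatorname{Log}(u-\omega_k)$ into real and imaginary parts for real $u$ in the physical range $u \in [0,1)$ (since $y\in[0,1)$ and $u=y^{1/B}$). I will then show that the imaginary parts cancel in conjugate pairs, up to a single constant that is absorbed into the integration constant $C_0$ of Proposition~\ref{prop:cyclotomic}. The real parts that remain should be exactly $C_k+S_k$.

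\textbf{Step 1: real and imaginary parts of each term.} Write $\omega_k^B = \cos(B\varphi_k) + i\sin(B\varphi_k)$. Since $u$ is real, $u-\omega_k = (u-\cos\varphi_k) - i\sin\varphi_k$. Its modulus is
\[
|u-\omega_k|^2 = u^2+1-2u\cos\varphi_k .
\]
Its principal argument is $\operatorname{Arg}(u-\omega_k) = \operatorname{atan2}(-\sin\varphi_k,\, u-\cos\varphi_k)$, because $\operatorname{atan2}(\mathrm{Im},\mathrm{Re})$ is precisely the principal branch. Hence $\operatorname{Log}(u-\omega_k) = \tfrac12\ln(u^2+1-2u\cos\varphi_k) + i\operatorname{Arg}(u-\omega_k)$. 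Multiplying out gives
\[
\operatorname{Re}\bigl[\omega_k^B\operatorname{Log}(u-\omega_k)\bigr] = C_k(u)+S_k(u),
\]
\[
\operatorname{Im}\bigl[\omega_k^B\operatorname{Log}(u-\omega_k)\bigr] = \tfrac12\sin(B\varphi_k)\ln|u-\omega_k|^2 + \cos(B\varphi_k)\operatorname{Arg}(u-\omega_k).
\]
This is the stated real expression term by term, so it remains only to show that the imaginary parts sum to a constant.

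\textbf{Step 2: pairing $k \leftrightarrow A-k$.} For $1\le k\le A-1$ with $k\neq A-k$, we have $\varphi_{A-k}=2\pi-\varphi_k$. Consequently $\cos\varphi$ and $\cos(B\varphi)$ are unchanged under the pairing, while $\sin\varphi$ and $\sin(B\varphi)$ change sign (here we use that $B$ is an integer). The modulus is therefore unchanged. Because $\sin\varphi_k\neq 0$ for these $k$, the point $u-\omega_k$ lies off the real axis. So $\operatorname{Arg}(u-\omega_{A-k}) = -\operatorname{Arg}(u-\omega_k)$ exactly, with no branch-cut ambiguity. Both summands of the imaginary part are odd under the pairing, so the two imaginary parts cancel.

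\textbf{Step 3: the self-paired real roots (the delicate step).} These are $k=0$ and, when $A$ is even, $k=A/2$; they need separate care because they sit on the branch cut or on the real axis. For $k=A/2$ we have $\omega=-1$ and $u+1>0$, so $\operatorname{Arg}=0$; also $\sin(B\pi)=0$, so the imaginary part vanishes and $S_{A/2}=0$. For $k=0$ we have $\omega=1$ and $u-1<0$ on $[0,1)$, so $\operatorname{Arg}(u-1)=\pi$; this is where the principal branch actually matters. Then $S_0=0$ because $\sin 0=0$, but the imaginary part equals the constant $\cos(0)\cdot\pi=\pi$. After multiplication by $-B/A$ it contributes the constant $-i\pi B/A$, which is independent of $u$ and is absorbed into $C_0$ when enforcing $F_\alpha(0)=0$.

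This bookkeeping of the $k=0$ branch term is the main obstacle. I would make explicit that the lemma holds as an identity of primitives modulo this constant, equivalently as $\operatorname{Re}L(u)$ with $\operatorname{Im}L\equiv\text{const}$. I would also fix the convention $\operatorname{atan2}(0,x)=\pi$ for $x<0$, so that no signed-zero issue arises in the formula for $S_k$.
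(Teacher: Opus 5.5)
Your proposal is correct and follows the paper's route: you split each $\operatorname{Log}(u-\omega_k)$ into modulus and principal argument, then cancel imaginary parts by pairing $k \leftrightarrow A-k$ via $\omega_{A-k}=\overline{\omega_k}$. Your separate treatment of the self-paired roots is more careful than the paper's proof, which claims all imaginary parts cancel; in fact on $[0,1)$ the $k=0$ term gives $\operatorname{Arg}(u-1)=\pi$, so the lemma holds only up to the constant $-i\pi B/A$ absorbed into $C_0$ (e.g.\ for $\alpha=1$, $L(u)=-\ln(1-u)-i\pi$), exactly as you state.
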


\begin{pf}
By utilizing Euler's formula, $\omega_k = \cos\varphi_k + i\sin\varphi_k$, we decompose the complex logarithm into its real and imaginary components, $\operatorname{Log}(u - \omega_k) = \ln|u - \omega_k| + i \arg(u - \omega_k)$. The squared modulus inside the real logarithm evaluates algebraically to $|u - \omega_k|^2 = (u - \cos\varphi_k)^2 + \sin^2\varphi_k = u^2 + 1 - 2u\cos\varphi_k$.

Because the roots of unity occur in complex conjugate pairs, $\omega_{A-k} = \overline{\omega_k}$, their corresponding angles satisfy $\varphi_{A-k} = 2\pi - \varphi_k \equiv -\varphi_k$. Consequently, the trigonometric components of the residues exhibit specific symmetries: $\cos(B\varphi_{A-k}) = \cos(B\varphi_k)$ and $\sin(B\varphi_{A-k}) = -\sin(B\varphi_k)$. Pairing the $k$-th and $(A-k)$-th terms in the summation analytically cancels out all imaginary components, leaving exactly twice the real parts. The real part of the logarithm contributes \eqref{eq:C_k}, while the imaginary phase argument contributes the arctangent \eqref{eq:S_k}.
\end{pf}

\begin{lemma}[Real Form for Odd Denominators]\label{lem:odd_denominators}
For odd  $A = 2M + 1$ ($M \in \mathbb{N}$), the sum evaluates to:
\begin{equation}
    L(u) = -\frac{B}{A} \ln(1 - u) - \frac{B}{A} \sum_{k=1}^M \cos(B\varphi_k) \ln(u^2 + 1 - 2u\cos\varphi_k)  + \frac{2B}{A} \sum_{k=1}^M \sin(B\varphi_k) \arctan\frac{u - \cos\varphi_k}{\sin\varphi_k} + \lambda_0,
\end{equation}
where $\lambda_0 = \frac{\pi B}{A} \cot\left(\frac{\pi B}{A}\right)$.
\end{lemma}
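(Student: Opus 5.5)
The plan is to start from Lemma~\ref{lem:conjugate_pairing}, or more precisely from the complex sum $L(u)=-\frac{B}{A}\sum_{k=0}^{A-1}\omega_k^B\operatorname{Log}(u-\omega_k)$. Since $A=2M+1$ is odd, the index set $\{0,\dots,A-1\}$ splits into three parts: the single real root $\omega_0=1$, the roots $\omega_1,\dots,\omega_M$ in the open upper half-plane, and their conjugates $\omega_{A-k}=\overline{\omega_k}$. There is no root at $-1$, so nothing else needs separate treatment. For $u\in[0,1)$ the $k=0$ term is $\omega_0^B\operatorname{Log}(u-1)=\ln(1-u)+i\pi$. This gives the term $-\frac{B}{A}\ln(1-u)$ together with a purely imaginary constant $-i\pi B/A$, which I will carry along as part of the additive constant.

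For $1\le k\le M$ I will combine the $k$-th and $(A-k)$-th terms. They are complex conjugates of each other for real $u$, so their sum is
$$2\operatorname{Re}\bigl[\omega_k^B\operatorname{Log}(u-\omega_k)\bigr]=\cos(B\varphi_k)\ln(u^2+1-2u\cos\varphi_k)-2\sin(B\varphi_k)\arg(u-\omega_k).$$
This is exactly $2C_k+2S_k$ in the notation of Lemma~\ref{lem:conjugate_pairing}. Multiplying by $-B/A$ produces the logarithmic sum in the statement with the stated coefficient, once the factor $\tfrac12$ in $C_k$ is accounted for.

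Next I will rewrite the $\operatorname{atan2}$ as an ordinary arctangent. For $1\le k\le M$ we have $\varphi_k\in(0,\pi)$, so $\sin\varphi_k>0$ and the point $(u-\cos\varphi_k,\,-\sin\varphi_k)$ lies in the open lower half-plane. There $\operatorname{atan2}$ is continuous, and checking the two limits $u-\cos\varphi_k\to\pm\infty$ gives
$$\operatorname{atan2}(-\sin\varphi_k,\,u-\cos\varphi_k)=\arctan\frac{u-\cos\varphi_k}{\sin\varphi_k}-\frac{\pi}{2}.$$
Substituting this turns the phase part into $+\frac{2B}{A}\sum_{k=1}^M\sin(B\varphi_k)\arctan\frac{u-\cos\varphi_k}{\sin\varphi_k}$. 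It also leaves a constant $-\frac{\pi B}{A}\sum_{k=1}^M\sin(2\pi kB/A)$.

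The main obstacle is identifying the total constant with $\lambda_0=\frac{\pi B}{A}\cot(\pi B/A)$. I will evaluate $\sum_{k=1}^M\sin(k\theta)$ with $\theta=2\pi B/A$ using the Lagrange identity
$$\sum_{k=1}^M\sin(k\theta)=\frac{\cos(\theta/2)-\cos\bigl((2M+1)\theta/2\bigr)}{2\sin(\theta/2)}.$$
Here $(2M+1)\theta/2=\pi B$, so the sum equals $\tfrac12\bigl[\cot(\pi B/A)-(-1)^B\csc(\pi B/A)\bigr]$. This does not visibly reduce to $\cot(\pi B/A)$ alone, and the constant also has to absorb the imaginary $-i\pi B/A$ coming from $\omega_0$. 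My first attempt is therefore to fix $\lambda_0$ by normalisation instead: set $u=0$ in the real expression, so all logarithms vanish and $\arctan(-\cot\varphi_k)=\varphi_k-\pi/2$. Then I will compute the finite sum $\sum_k\sin(B\varphi_k)(\varphi_k-\pi/2)$ by differentiating the Lagrange identity in $\theta$, and compare the result with the stated $\lambda_0$. If the two disagree, I would record that $\lambda_0$ only matters up to the global constant $C_0$ of Proposition~\ref{prop:cyclotomic}, which is fixed by $F_\alpha(0)=0$ anyway, and state precisely which normalisation yields $\frac{\pi B}{A}\cot(\pi B/A)$.
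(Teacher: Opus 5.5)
Your derivation of the $u$-dependent part is correct and uses the same argument as the paper: isolate $\omega_0=1$, pair $\omega_k$ with $\omega_{A-k}=\overline{\omega_k}$, and rewrite the phase with $\operatorname{atan2}(-s,x)=\arctan(x/s)-\pi/2$ for $s>0$. The gap is the constant. Your proposal ends in a conditional exactly where a proof is needed, and that step cannot be completed as stated, because $\lambda_0=\frac{\pi B}{A}\cot\frac{\pi B}{A}$ is wrong under either reading of $L$.

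To finish your own plan, differentiate $\sum_{k=1}^M\cos(k\theta)=\frac{\sin((M+\frac12)\theta)}{2\sin(\theta/2)}-\frac12$ and evaluate at $\theta=2\pi B/A$, where $(M+\tfrac12)\theta=\pi B$. This gives $\sum_{k=1}^M k\sin(k\theta)=-\frac{A(-1)^B}{4\sin(\pi B/A)}$. The constant enforcing $L(0)=0$ is therefore
\[
\frac{2B}{A}\sum_{k=1}^M\sin(B\varphi_k)\Bigl(\frac{\pi}{2}-\varphi_k\Bigr)=\frac{\pi B}{2A}\Bigl[\cot\frac{\pi B}{A}+(-1)^B\csc\frac{\pi B}{A}\Bigr].
\]
This equals $-\frac{\pi B}{2A}\tan\frac{\pi B}{2A}$ for odd $B$ and $\frac{\pi B}{2A}\cot\frac{\pi B}{2A}$ for even $B$. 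With principal logarithms the constant is instead $-\frac{\pi B}{2A}\bigl[\cot\frac{\pi B}{A}-(-1)^B\csc\frac{\pi B}{A}\bigr]-\frac{i\pi B}{A}$.

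A concrete check is $A=3$, $B=1$. Elementary partial fractions give $F_3(y)=-\frac13\ln(1-y)+\frac16\ln(y^2+y+1)+\frac{1}{\sqrt3}\arctan\frac{2y+1}{\sqrt3}-\frac{\pi}{6\sqrt3}$. The statement instead claims $\lambda_0=\frac{\pi}{3\sqrt3}$.

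The normalisation you were looking for does exist, but it is not what the lemma asserts. Using $\sum_{k=0}^{A-1}k\zeta^k=A/(\zeta-1)$ for $\zeta=e^{2\pi iB/A}$, the principal-branch sum satisfies $L(0)=-\frac{\pi B}{A}\cot\frac{\pi B}{A}-\frac{i\pi B}{A}$. So $\frac{\pi B}{A}\cot\frac{\pi B}{A}$ is the real part of the constant $C_0$ of Proposition~\ref{prop:cyclotomic} that must be added to the complex sum $L$. It is not the constant to add to the real trigonometric expression, which is what the lemma claims.

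The paper's proof only asserts that enforcing $L(0)=0$ ``produces'' $\lambda_0$, without doing the computation, and that assertion is where the error sits. Rather than hedging, you should conclude that the lemma is false as written and prove the corrected version with the constant displayed above. You can also note that this constant plays no role in Theorem~\ref{thm:elementary_completeness}.
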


\begin{pf}
For $A = 2M+1$, the single real root occurs at $k=0$. Evaluating \eqref{eq:C_k} and \eqref{eq:S_k} at $\varphi_0 = 0$ isolates this root as $C_0 = \ln(1-u)$ and $S_0 = 0$. The remaining $2M$ roots are paired symmetrically across the real axis as $\omega_k$ and $\omega_{A-k}$, see Figure~\ref{fig:strategy}.

Due to this vertical reflection symmetry, $C_{A-k} = C_k$ and $S_{A-k} = S_k$. Grouping these pairs reduces the sum range to $M$ and doubles the coefficients. Finally, evaluating the absolute term required to enforce $L(0) = 0$ produces the geometric shift $\lambda_0$.
\end{pf}

\begin{lemma}[Real Form for Even Denominators]\label{lem:even_denominators}
For even $A = 2M$ ($M \in \mathbb{N}$), the sum evaluates to:
\begin{equation}
    \begin{aligned}
        L(u) ={}& \frac{2B}{A} \operatorname{artanh}(u) + \frac{2B}{A} \sum_{k=1}^{\lfloor (M-1)/2 \rfloor} \cos(B\varphi_k) \operatorname{artanh}\frac{2u\cos\varphi_k}{u^2+1} - \frac{2B}{A} \sum_{k=1}^{\lfloor (M-1)/2 \rfloor} \sin(B\varphi_k) \arctan\frac{2u\sin\varphi_k}{u^2-1} \\
        & + \frac{2B}{A} (-1)^{\frac{B-1}{2}} \arctan(u) \mathbb{1}_{\{2 \mid M\}} +\lambda_0,
    \end{aligned}
\end{equation}
where $\lambda_0=- \frac{\pi B}{A} \cot\frac{\pi B}{A}$ and $\mathbb{1}_{\{\cdot\}}$ denotes the indicator function.
\end{lemma}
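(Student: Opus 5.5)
The plan is to start from the conjugate-paired representation of Lemma~\ref{lem:conjugate_pairing}, $L(u) = -\frac{B}{A}\sum_{k=0}^{A-1}[C_k(u)+S_k(u)]$, and exploit the fact that for even $A=2M$ the roots of unity are symmetric under both $\omega\mapsto\bar\omega$ and $\omega\mapsto-\omega$. Since $\gcd(A,B)=1$ and $A$ is even, $B$ is odd, so the residue weights satisfy $(-\omega_k)^B=-\omega_k^B$. This sign flip is what turns logarithms into $\operatorname{artanh}$ expressions and angle sums into the single arctangents in the statement.

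First I isolate the two real roots: $k=0$ ($\omega_0=1$, weight $1$) and $k=M$ ($\omega_M=-1$, weight $(-1)^B=-1$). Their contribution is
\[
-\tfrac{B}{A}\bigl[\ln(1-u)-\ln(1+u)\bigr]=\tfrac{2B}{A}\operatorname{artanh}(u),
\]
where the $\operatorname{atan2}$ terms vanish for $0\le u<1$ up to constant branch values $\pm\pi$, which I defer to $\lambda_0$. Next, if $2\mid M$, the imaginary roots $\pm i$ occur at $k=M/2$ and $k=3M/2$, with $\varphi=\pi/2$ and $\sin(B\pi/2)=(-1)^{(B-1)/2}$. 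Here the $C_k$ terms cancel because $\cos(B\pi/2)=0$. The two $S_k$ terms, with $\operatorname{atan2}(\mp1,u)$, combine to $\tfrac{2B}{A}(-1)^{(B-1)/2}\arctan u$ plus a constant.

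The remaining roots I group in quadruples $\{\omega_k,\bar\omega_k,-\omega_k,-\bar\omega_k\}$ for $1\le k\le\lfloor (M-1)/2\rfloor$, so that $0<\varphi_k<\pi/2$. The conjugate pair is already handled by Lemma~\ref{lem:conjugate_pairing}. The antipodal partner has $\varphi_{k+M}=\varphi_k+\pi$, giving $\cos(B\varphi_{k+M})=-\cos(B\varphi_k)$ and $\sin(B\varphi_{k+M})=-\sin(B\varphi_k)$. In the $C$ part this yields
\[
-\tfrac{B}{A}\cos(B\varphi_k)\bigl[\ln(u^2+1-2u\cos\varphi_k)-\ln(u^2+1+2u\cos\varphi_k)\bigr],
\]
which equals $\tfrac{2B}{A}\cos(B\varphi_k)\operatorname{artanh}\frac{2u\cos\varphi_k}{u^2+1}$ by the identity $\operatorname{artanh}x=\tfrac12\ln\frac{1+x}{1-x}$. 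In the $S$ part I get a difference of two arguments, $\arg(u-\bar\omega_k)-\arg(u+\omega_k)$ up to sign. By the arctangent subtraction formula this should become $-\arctan\frac{2u\sin\varphi_k}{u^2-1}$ plus a constant; indeed the tangent of the angle difference is $\frac{2u\sin\varphi_k}{u^2-1}$ after simplifying with $\cos^2+\sin^2=1$. A check of the multiplicities (two per quadruple) should confirm the coefficient $2B/A$.

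The main obstacle is the branch bookkeeping. I need to verify that on $0\le u<1$ each combined argument difference stays in a single branch of $\arctan$, so that the discrepancy is a genuine constant rather than a jump. Here $u^2-1<0$ keeps the quotient negative and continuous, which helps. Once that is settled, I will not sum all the constants explicitly. Instead I will fix $\lambda_0$ by the normalization $L(0)=0$, evaluating the original complex sum $-\frac{B}{A}\sum\omega_k^B\operatorname{Log}(-\omega_k)$. Using $\operatorname{Log}(-\omega_k)=i(\varphi_k-\pi)$ and the standard sum $\sum_k k\,\omega_k^B = A/(\omega_1^B-1)$, this reduces to $\frac{\pi B}{A}\cot\frac{\pi B}{A}$ up to sign, mirroring the odd case of Lemma~\ref{lem:odd_denominators}. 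The sign there is fixed by the choice of branch for $\operatorname{Log}(-1)$.
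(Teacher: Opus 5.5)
Your route is essentially the paper's: antipodal pairing $\omega_k\leftrightarrow-\omega_k$ (using that $B$ is odd), isolation of $\pm1$ and, for $2\mid M$, of $\pm i$, the remaining roots grouped by conjugation and imaginary-axis reflection, and the constant read off at $u=0$. Your $\operatorname{artanh}$ terms, the $\pm i$ contribution and the multiplicity count are correct. Two steps need repair, however.

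First, the phase combination is misstated. From $S_k=-\sin(B\varphi_k)\arg(u-\omega_k)$ and $\sin(B\varphi_{k+M})=-\sin(B\varphi_k)$ you get $S_k+S_{k+M}=-\sin(B\varphi_k)\bigl[\arg(u-\omega_k)-\arg(u+\omega_k)\bigr]$. The difference involves $\omega_k$, not $\bar\omega_k$. The difference you wrote equals $-[\arg(u-\omega_k)+\arg(u+\omega_k)]$, whose tangent is $\sin 2\varphi_k/(u^2-\cos 2\varphi_k)$, so the claimed simplification would not occur. With the correct pair, the branch question is settled directly. The quotient $(u-\omega_k)/(u+\omega_k)$ is a positive multiple of $u^2-1-2iu\sin\varphi_k$. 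This point lies in the open third quadrant for $0<u<1$ and equals $-1$ at $u=0$, where the principal arguments give exactly $-\pi$. Hence $\arg(u-\omega_k)-\arg(u+\omega_k)=-\pi-\arctan\frac{2u\sin\varphi_k}{u^2-1}$ on all of $[0,1)$, with no jump.

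Second, and more substantively, you do not establish $\lambda_0$. You leave its sign open and attribute it to the branch of $\operatorname{Log}(-1)$, but that branch cannot affect the real part. The $k=0$ weight $\omega_0^B=1$ is real, so $\pm i\pi$ only moves $\operatorname{Im}L(0)$. Indeed, the principal-branch $L(u)$ carries the constant imaginary part $-i\pi B/A$ from $\operatorname{Log}(u-1)$, and the real formula represents $\operatorname{Re}L$. Likewise, ``normalization $L(0)=0$'' is the wrong condition: every non-constant term in the statement vanishes at $u=0$, so it would force $\lambda_0=0$. What you need is $\lambda_0=\operatorname{Re}L(0)$ for the unnormalized sum, and your own computation determines it. With $z=\mathrm{e}^{2\pi iB/A}$ you get $L(0)=-\frac{B}{A}\cdot\frac{2\pi i}{z-1}$ up to an imaginary constant. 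Since $\frac{1}{z-1}=-\frac12-\frac{i}{2}\cot\frac{\pi B}{A}$, this gives $\operatorname{Re}L(0)=-\frac{\pi B}{A}\cot\frac{\pi B}{A}$ with no sign ambiguity. As a cross-check, the additive constants you deferred agree. They are $-\frac{2\pi B}{A}\sin(B\varphi_k)$ per quadruple and $-\frac{\pi B}{A}\sin\frac{\pi B}{2}$ from $\pm i$; the $\pm1$ pair contributes nothing, since $S_0$ and $S_M$ vanish identically. These sum to $-\frac{\pi B}{A}\sum_{k=1}^{M-1}\sin(B\varphi_k)=-\frac{\pi B}{A}\cot\frac{\pi B}{A}$.
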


\begin{pf}
For even denominators, every root $\omega_k$ possesses a~geometric antipode $\omega_{k+M} = -\omega_k$ across the origin, see Figure \ref{fig:strategy}. Pairing these antipodal roots transforms the logarithmic differences directly into hyperbolic functions:
\begin{equation}
    \frac{1}{2}\ln(u^2 + 1 - 2u\cos\varphi_k) - \frac{1}{2}\ln(u^2 + 1 + 2u\cos\varphi_k) = -\operatorname{artanh}\frac{2u\cos\varphi_k}{u^2+1}.
\end{equation}
Similarly, the phase arguments combine via arctangent addition identities to produce the corresponding $\arctan$ terms. The roots can be further paired across the imaginary axis as $k$ and $M-k$, leaving isolated real and pure imaginary roots depending on the parity of $M$. The pure imaginary roots generate the isolated $\arctan(u)$ term tracked by the indicator function $\mathbb{1}_{\{2 \mid M\}}$. Summing the series at $u=0$ evaluates the geometric constant to $-\frac{\pi B}{A} \cot\frac{\pi B}{A}$.
\end{pf}

\section{Classification of Elementary Inverses via Differential Algebra}

We now examine when the implicit solution $F_\alpha(y) = Ct$ can be solved for $y(t)$ within the field of elementary functions \cite{Bronstein2005}. 

\begin{theorem}[Ritt's Invertibility Theorem, \cite{Ritt1925}]\label{thm:ritt}
Let $f(z)$ be an elementary function. Its functional inverse $f^{-1}(z)$ is elementary if and only if $f$ can be written as a~finite chain of compositions of single-variable functions $f = \phi_m \,\circ\, \phi_{m-1} \,\circ\, \dots \,\circ\, \phi_1,$ where each mapping $\phi_i$ is either an algebraic function ($\phi_i \in \mathcal{A}$) or a~single transcendental elementary operation ($\phi_i \in \mathcal{T} = \{\exp, \ln\}$). Furthermore, if $\phi_{i+1} \in \mathcal{T}$, then $\phi_i \in \mathcal{A}$; that is, transcendental functions cannot be directly composed without an intervening algebraic stage.
\end{theorem}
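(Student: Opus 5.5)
The plan is to prove the two directions separately. Sufficiency is a direct composition argument; necessity, which is the content of Ritt's result \cite{Ritt1925}, will be reduced to a structural statement about Liouvillian towers.

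\emph{Sufficiency.} Suppose $f = \phi_m\circ\dots\circ\phi_1$ with each $\phi_i\in\mathcal{A}\cup\mathcal{T}$. Then formally $f^{-1}=\phi_1^{-1}\circ\dots\circ\phi_m^{-1}$ on a suitable branch. The inverse of an algebraic function $\phi$ is again algebraic: if $P(z,\phi(z))=0$, then $P(\phi^{-1}(w),w)=0$. The inverses of $\exp$ and $\ln$ are $\ln$ and $\exp$. Hence each factor $\phi_i^{-1}$ is elementary, and so is the composition. The interleaving condition is not needed in this direction; it only records a normal form, since $\exp\circ\ln=\mathrm{id}$ and adjacent algebraic stages can be merged.

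\emph{Necessity.} Let $g=f^{-1}$ be elementary, so $f(g(z))=z$. I would fix a minimal Liouvillian tower $\mathbb{C}(z)=K_0\subset K_1\subset\dots\subset K_n\ni g$. Each step $K_{j}=K_{j-1}(\theta_j)$ is either algebraic or adjoins a monomial $\theta_j=\exp(u_j)$ or $\theta_j=\ln(u_j)$ with $u_j\in K_{j-1}$. The argument is an induction on the number $n_{\mathrm{tr}}$ of transcendental monomials.
\begin{itemize}
\item \emph{Base case.} If $n_{\mathrm{tr}}=0$, then $g$ is algebraic, so $f$ is algebraic and the chain has the single link $\phi_1\in\mathcal{A}$.
\item \emph{Inductive step.} Write $w=g(z)$, so $z=f(w)$. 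Since $\mathbb{C}(w)\subset K_n$ and $z\in\mathbb{C}(w,\dots)$, the field generated by $w$ over $\mathbb{C}$ together with the monomials of $f$ evaluated along $w$ must contain $z$. Matching transcendence degrees on both sides, using the differential-algebraic independence of exponentials and logarithms (Rosenlicht's structure theorem, or Ax's Schanuel-type theorem), should force two things. First, $\operatorname{trdeg}_{\mathbb{C}(z)}K_n = 1 + \operatorname{trdeg}$ of the corresponding tower for $f$ over $\mathbb{C}(w)$, minus one. Second, each transcendental monomial $\theta_j$ must be algebraic over $\mathbb{C}(\theta_{j-1})$ alone, not merely over $K_{j-1}$. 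This would show that the tower is a \emph{chain}: $\theta_1=\exp$ or $\ln$ of an algebraic function of $z$, $\theta_2$ is $\exp$ or $\ln$ of an algebraic function of $\theta_1$, and so on. Reading this chain backwards produces the decomposition $g=\psi_1\circ\dots\circ\psi_r$ with $\psi_i\in\mathcal{A}\cup\mathcal{T}$, and inverting gives the same form for $f$.
\item \emph{Normal form.} Finally, I would merge consecutive algebraic factors and cancel any adjacent $\exp\circ\ln$ or $\ln\circ\exp$ pairs. If two transcendental factors remain adjacent, such as $\exp\circ\exp$, I insert the identity map as the intervening algebraic stage. This yields the stated interleaving.
\end{itemize}

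\emph{Main obstacle.} The key step is showing that the monomials are forced into a linear chain, where each depends algebraically only on its predecessor. A generic elementary function has monomials depending jointly on several earlier ones, as in $z+e^{z}$, and for those the inverse fails to be elementary. I would first try to prove this via analytic continuation and monodromy, which is Ritt's original route. Suppose $\theta_j$ depends nontrivially on two independent earlier generators. Then continuing $g$ around the logarithmic branch points should generate infinitely many algebraically independent branches of $z=f(w)$. That contradicts the finiteness of the transcendence degree of the tower for $f$. If this becomes unwieldy, the fallback is Rosenlicht's criterion: in an elementary extension, any algebraic relation among exponentials and logarithms must come from a linear relation among their arguments. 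This criterion should make the independence count rigorous.
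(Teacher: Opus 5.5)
The paper gives no proof of this statement. It imports it by citation from Ritt (1925), so your proposal has to stand on its own.

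Your sufficiency direction is fine once constant algebraic links are excluded. You are also right that the interleaving clause is vacuous, since the identity is an admissible algebraic stage.

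The necessity direction has a genuine gap. The whole content of Ritt's theorem lies in the sentence where matching transcendence degrees ``should force'' the tower into a chain, and nothing in the proposal establishes this. Three further problems:
\begin{itemize}
\item The induction on $n_{\mathrm{tr}}$ is only nominal, because the inductive hypothesis is never used.
\item The transcendence-degree equality is asserted rather than derived.
\item The claim that $\theta_j$ is algebraic over $\mathbb{C}(\theta_{j-1})$ cannot hold as written, because $\theta_j$ is transcendental over $K_{j-1}\supseteq\mathbb{C}(\theta_{j-1})$.
\end{itemize}

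The intended version is that each argument $u_j$ is algebraic over $\mathbb{C}(\theta_{j-1})$ and that $g$ is algebraic over $\mathbb{C}(\theta_n)$. Even this is false for an arbitrary minimal tower, so a proof cannot work by showing that the given tower is a chain. Take $g=\ln\ln z$, whose inverse $\exp\circ\exp$ is elementary, with the tower $\theta_1=\ln z$, $\theta_2=\ln(z\theta_1)$, $g=\theta_2-\theta_1$. Two logarithmic monomials are unavoidable: differentiating $g$ puts $\ln z$ into any tower containing $g$, and $g$ is transcendental over $\mathbb{C}(z,\ln z)$. So this tower is minimal. Yet $u_2=z\ln z$ is not algebraic over $\mathbb{C}(\ln z)$, and $g$ is not algebraic over $\mathbb{C}(\theta_2)$.

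What must be shown is that the generators can be \emph{re-chosen} to form a chain. That is exactly the step a structure theorem has to carry out explicitly; naming Rosenlicht or Ax does not do it.

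The same example also defeats your monodromy fallback. The continuations $\ln(\ln z+2\pi i k)$, $k\in\mathbb{Z}$, are infinitely many branches of $g$ that are algebraically independent over $\mathbb{C}(z,\ln z)$. Meanwhile $f=\exp\circ\exp$ has a finite tower. So ``infinitely many independent branches'' yields no contradiction by itself.

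As it stands, the necessity half of your proposal restates the theorem rather than arguing for it.
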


\noindent
\begin{minipage}{\textwidth}
\centering
\newcommand{\unitCircleR}{1.0cm} 
\newcommand{\rootFontScale}{0.75}
\definecolor{rootgray}{gray}{0.5}

\tikzset{
    polecircle/.style={circle, fill, inner sep=1.1pt},
    polereal/.style={circle, draw, fill=white, inner sep=1.1pt},
    poleimag/.style={circle, draw, fill=white, inner sep=1.1pt,
        path picture={
            \draw[black, thin] (path picture bounding box.south west) -- (path picture bounding box.north east);
            \draw[black, thin] (path picture bounding box.north west) -- (path picture bounding box.south east);
        }
    },
    poletriangle/.style={regular polygon, regular polygon sides=3, draw, fill, inner sep=0.8pt},
    polesquare/.style={regular polygon, regular polygon sides=4, draw, fill, inner sep=1.0pt}
}

\newcommand{\drawFullCircle}[1]{
    \draw[thin, rootgray] (-1.25*\unitCircleR, 0) -- (1.25*\unitCircleR, 0);
    \draw[thin, rootgray] (0, -1.25*\unitCircleR) -- (0, 1.25*\unitCircleR);
    \draw[thin, rootgray, dashed] (\unitCircleR, 0) arc (0:360:\unitCircleR);
    \foreach \k in {0,...,#1} {\coordinate (W\k) at ({\k * (360 / #1)}:\unitCircleR);}
}

\newcommand{\drawSemiCircle}[1]{
    \draw[thin, rootgray] (-1.25*\unitCircleR, 0) -- (1.25*\unitCircleR, 0);
    \draw[thin, rootgray] (0, -0.1*\unitCircleR) -- (0, 1.25*\unitCircleR);
    \draw[thin, rootgray, dashed] (\unitCircleR, 0) arc (0:180:\unitCircleR);
    \foreach \k in {0,...,#1} {\coordinate (W\k) at ({\k * (360 / #1)}:\unitCircleR);}
}

\begin{tikzpicture}[>=stealth, scale=0.8]
    \def\innerFactor{0.8}
    \def\secondInnerFactor{0.7}
    \def\outerFactor{1.2}
    \def\labelPadding{2pt}

    \begin{scope}[yshift=0cm]
        \node[anchor=east, scale=\rootFontScale] at (-1.5, 0) {$A \bmod 2 = 1$};
        \def\numPoles{7}
        \pgfmathtruncatemacro{\maxK}{\numPoles-1}
        \pgfmathtruncatemacro{\midK}{(\numPoles-1)/2}
        
        \begin{scope}[xshift=0.0cm]
            \drawFullCircle{\numPoles}
            \foreach \k in {0,...,\maxK} { \node[polecircle] at (W\k) {}; }
            \node[fill=white, inner sep=\labelPadding] at (0,0) {$L$};
            \draw[->] ({360/\numPoles}:{\innerFactor*\unitCircleR}) arc [start angle={360/\numPoles}, end angle={\maxK*360/\numPoles}, radius={\innerFactor*\unitCircleR}];
        \end{scope}
        \node at (1.75, 0) {$\Longrightarrow$};
        \begin{scope}[xshift=3.5cm]
            \drawFullCircle{\numPoles}
            \foreach \k in {0,...,\maxK} { \node[polecircle] at (W\k) {}; }
            \node[fill=white, inner sep=\labelPadding] at (0,0) {$L$};
            \draw[->] ({360/\numPoles}:{\innerFactor*\unitCircleR}) arc [start angle={360/\numPoles}, end angle={\midK*360/\numPoles}, radius={\innerFactor*\unitCircleR}];
            \draw[->] ({\maxK*360/\numPoles}:{\innerFactor*\unitCircleR}) arc [start angle={\maxK*360/\numPoles}, end angle={360-\midK*360/\numPoles}, radius={\innerFactor*\unitCircleR}];
        \end{scope}
        \node at (5.25, 0) {$\Longrightarrow$};
        \begin{scope}[xshift=7cm]
            \drawFullCircle{\numPoles}
            \foreach \k in {0,...,\maxK} { \node[polecircle] at (W\k) {}; }
            \foreach \k in {1,...,\midK} {
                \pgfmathtruncatemacro{\reflectionK}{\numPoles-\k}
                \draw[->, thin, densely dashed, shorten <=3pt, shorten >=3pt] (W\reflectionK) -- (W\k);
            }
        \end{scope}
        \node at (8.75, 0) {$\Longrightarrow$};
        \begin{scope}[xshift=10.5cm]
            \drawFullCircle{\numPoles}
            \node[polecircle] at (W0) {};
            \foreach \k in {1,...,\midK} { \node[poletriangle] at (W\k) {}; }
            \node[fill=white, inner sep=\labelPadding] at (0,0) {$L$};
            \draw[->] ({360/\numPoles}:{\innerFactor*\unitCircleR}) arc [start angle={360/\numPoles}, end angle={\midK*360/\numPoles}, radius={\innerFactor*\unitCircleR}];
            \draw[->] ({360/\numPoles}:{\secondInnerFactor*\unitCircleR}) arc [start angle={360/\numPoles}, end angle={\midK*360/\numPoles}, radius={\secondInnerFactor*\unitCircleR}];
        \end{scope}
    \end{scope}

    \begin{scope}[yshift=-2.8cm]
        \node[anchor=east, scale=\rootFontScale] at (-1.5, 0) {$A \bmod 2 = 0$};
        \def\numPoles{10}
        \pgfmathtruncatemacro{\maxK}{\numPoles-1}
        \pgfmathtruncatemacro{\midK}{\numPoles/2 - 1}
        
        \begin{scope}[xshift=0.0cm]
            \drawFullCircle{\numPoles}
            \foreach \k in {0,...,\maxK} { \node[polecircle] at (W\k) {}; }
            \node[fill=white, inner sep=\labelPadding] at (0,0) {$L$};
            \draw[->] (0:{\innerFactor*\unitCircleR}) arc [start angle=0, end angle={\maxK*360/\numPoles}, radius={\innerFactor*\unitCircleR}];
        \end{scope}
        \node at (1.75, 0) {$\Longrightarrow$};
        \begin{scope}[xshift=3.5cm]
            \drawFullCircle{\numPoles}
            \foreach \k in {0,...,\maxK} { \node[polecircle] at (W\k) {}; }
            \node[fill=white, inner sep=\labelPadding] at (0,0) {$L$};
            \draw[->] (0:{\innerFactor*\unitCircleR}) arc [start angle=0, end angle={\midK*360/\numPoles}, radius={\innerFactor*\unitCircleR}];
            \draw[->] (180:{\innerFactor*\unitCircleR}) arc [start angle=180, end angle={\maxK*360/\numPoles}, radius={\innerFactor*\unitCircleR}];
        \end{scope}
        \node at (5.25, 0) {$\Longrightarrow$};
        \begin{scope}[xshift=7cm]
            \drawFullCircle{\numPoles}
            \foreach \k in {0,...,\maxK} { \node[polecircle] at (W\k) {}; }
            \foreach \k in {0,...,\midK} {
                \pgfmathtruncatemacro{\flipK}{\k+\numPoles/2}
                \draw[->, thin, densely dashed, shorten <=3pt, shorten >=3pt] (W\flipK) -- (W\k);
            }
        \end{scope}
        \node at (8.75, 0) {$\Longrightarrow$};
        \begin{scope}[xshift=10.5cm]
            \drawFullCircle{\numPoles}
            \foreach \k in {0,...,\midK} { \node[poletriangle] at (W\k) {}; }
            \node[fill=white, inner sep=\labelPadding] at (0,0) {$L$};
            \draw[->] (0:{\innerFactor*\unitCircleR}) arc [start angle=0, end angle={\midK*360/\numPoles}, radius={\innerFactor*\unitCircleR}];
            \draw[->] (0:{\secondInnerFactor*\unitCircleR}) arc [start angle=0, end angle={\midK*360/\numPoles}, radius={\secondInnerFactor*\unitCircleR}];
        \end{scope}
    \end{scope}

    \begin{scope}[yshift=-5.6cm]
        \node[anchor=east, scale=\rootFontScale] at (-1.5, 0) {$A \bmod 4 = 0$};
        \def\numEvenPoles{12}
        \pgfmathtruncatemacro{\evenMidK}{\numEvenPoles/2 - 1}
        \pgfmathtruncatemacro{\evenQrtK}{\evenMidK/2}
        
        \begin{scope}[xshift=0.0cm]
            \drawSemiCircle{\numEvenPoles}
            \foreach \k in {0,...,\evenMidK} { \node[poletriangle] at (W\k) {}; }
            \node[fill=white, inner sep=\labelPadding] at (0,0) {$L$};
            \draw[->] (0:{\innerFactor*\unitCircleR}) arc [start angle=0, end angle={\evenMidK*360/\numEvenPoles}, radius={\innerFactor*\unitCircleR}];
        \end{scope}
        \node at (1.75, 0) {$\Longrightarrow$};
        \begin{scope}[xshift=3.5cm]
            \drawSemiCircle{\numEvenPoles}
            \foreach \k in {0,...,\evenMidK} { \node[poletriangle] at (W\k) {}; }
            \node[fill=white, inner sep=\labelPadding] at (0,0) {$L$};
            \draw[->] ({360/\numEvenPoles}:{\innerFactor*\unitCircleR}) arc [start angle={360/\numEvenPoles}, end angle={\evenQrtK*360/\numEvenPoles}, radius={\innerFactor*\unitCircleR}];
            \draw[->] ({180-360/\numEvenPoles}:{\innerFactor*\unitCircleR}) arc [start angle={180-360/\numEvenPoles}, end angle={180-\evenQrtK*360/\numEvenPoles}, radius={\innerFactor*\unitCircleR}];
        \end{scope}
        \node at (5.25, 0) {$\Longrightarrow$};
        \begin{scope}[xshift=7cm]
            \drawSemiCircle{\numEvenPoles}
            \foreach \k in {0,...,\evenMidK} { \node[poletriangle] at (W\k) {}; }
            \foreach \k in {1,...,\evenQrtK} {
                \pgfmathtruncatemacro{\evenMirrorK}{\evenMidK+1-\k}
                \draw[->, thin, densely dashed, shorten <=3pt, shorten >=3pt] (W\evenMirrorK) -- (W\k);
            }
        \end{scope}
        \node at (8.75, 0) {$\Longrightarrow$};
        \begin{scope}[xshift=10.5cm]
            \drawSemiCircle{\numEvenPoles}
            \node[poletriangle] at (W0) {};
            \pgfmathtruncatemacro{\topK}{\evenQrtK+1}
            \node[poletriangle] at (W\topK) {};
            \foreach \k in {1,...,\evenQrtK} { \node[polesquare] at (W\k) {}; }
            \node[fill=white, inner sep=\labelPadding] at (0,0) {$L$};
            \draw[->] ({360/\numEvenPoles}:{\innerFactor*\unitCircleR}) arc [start angle={360/\numEvenPoles}, end angle={\evenQrtK*360/\numEvenPoles}, radius={\innerFactor*\unitCircleR}];
            \draw[->] ({360/\numEvenPoles}:{\secondInnerFactor*\unitCircleR}) arc [start angle={360/\numEvenPoles}, end angle={\evenQrtK*360/\numEvenPoles}, radius={\secondInnerFactor*\unitCircleR}];
        \end{scope}
    \end{scope}

\end{tikzpicture}
\captionof{figure}{Progression of geometric pairing strategies for cyclotomic roots based on denominator parity.}
\label{fig:strategy}
\end{minipage}

\begin{lemma}[Logarithmic Derivative Residues, \cite{Rosenlicht1972}]\label{lem:rosenlicht}
Let $\phi(u)$ be an algebraic function over $\mathbb{C}(u)$. Then its logarithmic derivative $\phi'(u)/\phi(u)$ is algebraic, and its residue at any point $u_0 \in \mathbb{C}$ is strictly rational: $\operatorname{Res}_{u = u_0} \left({\phi'(u)}/{\phi(u)}\right) \in \mathbb{Q}$.
\end{lemma}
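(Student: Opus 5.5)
The plan is to work locally at $u_0$ with Puiseux expansions, which is the natural setting for an algebraic function over $\mathbb{C}(u)$.

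\textbf{Step 1: algebraicity of the logarithmic derivative.} Let $\phi$ satisfy a minimal polynomial $P(u,\phi)=0$. Differentiating gives
\[
\phi' = -\frac{P_u(u,\phi)}{P_\phi(u,\phi)},
\]
and $P_\phi(u,\phi)\neq 0$ by separability in characteristic zero. Hence $\phi'$ lies in the finite extension $\mathbb{C}(u,\phi)$. Since $\mathbb{C}(u,\phi)$ is a field, $\phi'/\phi$ lies in it as well and is therefore algebraic. This step is routine.

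\textbf{Step 2: local Puiseux expansion.} Fix $u_0\in\mathbb{C}$ and a branch of $\phi$ at $u_0$. By the Newton--Puiseux theorem there is a ramification index $e\in\mathbb{N}$ and a local parameter $\tau$ with $u-u_0=\tau^e$, such that
\[
\phi = c\,\tau^{m}\bigl(1+\sum_{j\ge1} a_j\tau^j\bigr), \qquad m\in\mathbb{Z},\ c\neq0.
\]
Equivalently, $\phi = c\,(u-u_0)^{m/e}\,h$, where $h$ is a convergent power series in $(u-u_0)^{1/e}$ with $h(u_0)=1$. Taking the logarithmic derivative gives
\[
\frac{\phi'}{\phi} = \frac{m/e}{u-u_0} + \frac{h'}{h},
\]
where $h'/h$ is a series in $(u-u_0)^{1/e}$. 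Writing $\mathrm{d}/\mathrm{d}u=(e\tau^{e-1})^{-1}\mathrm{d}/\mathrm{d}\tau$, one sees that $h'/h$ has only terms $(u-u_0)^{j/e-1}$ with $j\ge1$.

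\textbf{Step 3: reading off the residue.} I interpret the residue at $u_0$ as the coefficient of $(u-u_0)^{-1}$ in the Puiseux expansion on the chosen branch. This is the same as the coefficient of $\mathrm{d}u/(u-u_0)$ up to the normalisation $1/e$ per place. It equals $m/e\in\mathbb{Q}$, because the terms $(u-u_0)^{j/e-1}$ with $j\ge1$ never equal $(u-u_0)^{-1}$. If one instead uses the place-wise residue of the differential $(\phi'/\phi)\,\mathrm{d}u$ on the Riemann surface, one gets $\operatorname{ord}_\tau\phi=m\in\mathbb{Z}$. Summing over the branches above $u_0$ also gives a rational number. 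Either way the residue is rational.

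\textbf{Main obstacle.} The substantive point is making ``residue'' precise for a multivalued function. I would fix the convention of the coefficient of $(u-u_0)^{-1}$ on each branch, since this is the form used later in the Rosenlicht argument. Under that convention I would justify the Puiseux step carefully: the leading exponent $m/e$ comes from a Newton polygon slope, and slopes are rational. The key fact to stress is that only the leading monomial contributes to the $(u-u_0)^{-1}$ coefficient. This is the essential contrast with the irrational cyclotomic residues $-\tfrac{B}{A}\omega_k^B$ of Proposition~\ref{prop:cyclotomic}.
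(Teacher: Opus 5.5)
The paper gives no proof of this lemma; it simply cites Rosenlicht. The standard algebraic fact behind it is that at every place $P$ of the function field $\mathbb{C}(u,\phi)$ one has $\operatorname{res}_P(\mathrm{d}\phi/\phi)=\operatorname{ord}_P\phi\in\mathbb{Z}$. Your proposal is correct and is essentially the concrete analytic version of that fact.

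Step~1 is fine. You implicitly need $\phi\neq 0$, and $P_\phi(u,\phi)\neq 0$ holds because $P_\phi$ is a nonzero polynomial of lower $\phi$-degree than the minimal polynomial. In Step~2, calling $h'/h$ ``a series in $(u-u_0)^{1/e}$'' is loose, since it contains the negative powers $(u-u_0)^{j/e-1}$ for $1\le j<e$. Your next sentence gives the correct exponent set, and that is all Step~3 uses: on a branch, the coefficient of $(u-u_0)^{-1}$ is exactly $m/e$.

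Compared with the bare citation, your route gains three things:
\begin{itemize}
\item It gives ``residue'' a precise meaning for a multivalued function, which the paper's statement never does. It also shows that both natural conventions (per-branch coefficient, or place-wise residue of the differential $(\phi'/\phi)\,\mathrm{d}u$) yield rational values.
\item It proves a sharper statement: the place-wise residue is an integer, and the per-branch residue has denominator dividing $e\le[\mathbb{C}(u,\phi):\mathbb{C}(u)]$.
\item It makes clear why the lemma applies as used in Theorem~\ref{thm:elementary_completeness}. There $\phi_2'/\phi_2$ is a single-valued rational function, so on every branch its coefficient of $(u-\omega_k)^{-1}$ is the ordinary residue $-\frac{B}{A}\omega_k^B$. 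Rationality then forces $\omega_k^B\in\mathbb{Q}$.
\end{itemize}
The valuation-theoretic formulation, by contrast, avoids convergence and branch choices altogether. It is also the form that carries over to general differential fields in Liouville--Rosenlicht-type arguments.
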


\begin{theorem}[Completeness of Elementary Explicit Solutions]\label{thm:elementary_completeness}
Let $\alpha = A/B \in \mathbb{Q}_{>0}$ with $\gcd(A, B) = 1$. The solution $y(t)$ to the initial value problem \eqref{eq:ode_main} is an elementary function if and only if $\alpha \in \{1, 2\}$.
The corresponding explicit solutions are:
\begin{align}
    \alpha = 1: \quad & y(t) = 1 - \exp(-Ct), \label{eq:sol_1} \\
    \alpha = 2: \quad & y(t) = \tanh(Ct). \label{eq:sol_2}
\end{align}
\end{theorem}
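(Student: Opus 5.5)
We split the argument into the two implications. Sufficiency is a direct computation. For $\alpha=1$ we have $A=B=1$, so Proposition~\ref{prop:cyclotomic} gives $F_1(y)=-\ln(1-y)$. Setting this equal to $Ct$ and exponentiating yields \eqref{eq:sol_1}. For $\alpha=2$ we have $A=2$, $B=1$, and Lemma~\ref{lem:even_denominators} with $M=1$ collapses to $F_2(y)=\operatorname{artanh}(y)$, since the cotangent constant vanishes. Inverting gives \eqref{eq:sol_2}. In both cases the inverse is elementary, so the ``if'' direction follows immediately.

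For necessity we use Theorem~\ref{thm:ritt} with $f=F_\alpha$. By Proposition~\ref{prop:cyclotomic}, after the algebraic substitution $u=y^{1/B}$ we may write $F_\alpha = P(u)+L(u)$. Here $P$ is a polynomial and $L=\sum_k c_k\operatorname{Log}(u-\omega_k)$ with $c_k=-\tfrac{B}{A}\omega_k^B$. Suppose the inverse is elementary. Ritt's theorem then forces a chain decomposition, and the outermost transcendental link must absorb all the logarithms. Concretely, since $F_\alpha$ is a transcendental function of $u$ whose derivative $Bu^{B-1}/(1-u^A)$ is rational, the chain must be of the form $F_\alpha=\lambda\ln\phi(u)+(\text{algebraic})$ with a single logarithm. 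The reason is that two independent logarithms, or $\exp$/$\ln$ stacked in other ways, cannot reproduce a rational derivative while remaining invertible; this is the standard Liouville-type step. We then argue that the algebraic part must be absent and that $\phi$ must be algebraic. Differentiating gives $F_\alpha'=\lambda\,\phi'/\phi+(\text{algebraic})'$. We compare residues at each $\omega_k$: the residue of $F_\alpha'$ there is $c_k$. By Lemma~\ref{lem:rosenlicht}, $c_k/\lambda\in\mathbb{Q}$ for all $k$, so all ratios $c_k/c_0=\omega_k^B$ must be rational. Since $\gcd(A,B)=1$, the numbers $\omega_k^B$ run over all $A$-th roots of unity. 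Real rational roots of unity are only $\pm1$, which forces $A\in\{1,2\}$.

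It remains to exclude $B\ge2$. When $B\ge 2$ the polynomial part $P$ is nonzero for $A=1$, namely $B-1\ge A$; for $A=2$ and $B$ odd $\ge3$ it is nonzero as well. Then $F_\alpha$ has the form $P(u)+\lambda\ln\phi(u)$ with $P$ a nonconstant polynomial. Such a sum is ``algebraic $+$ log'' rather than a composition chain. By the alternation clause of Theorem~\ref{thm:ritt}, a sum of a nonconstant algebraic function and a logarithm is not of chain form; its inverse is of Lambert-$W$ type. We therefore conclude $B=1$, leaving exactly $\alpha\in\{1,2\}$. The negative-exponent remark shows nothing new arises, although $\alpha<0$ is excluded by hypothesis anyway.

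The main obstacle is rigorously extracting the normal form ``single $\ln$ of an algebraic function, with no additive algebraic term'' from Ritt's chain condition. Ritt's theorem is stated for compositions, but $P+L$ is a sum, so one must show that a sum cannot be rewritten as a valid chain. Our first attempt would be to note that any chain with outermost link $\ln$ gives derivative $\phi'/\phi$ with purely logarithmic poles and no polynomial growth at $u=\infty$. This contradicts $P'\ne0$, since $F_\alpha'$ then behaves like $-B u^{B-1-A}$ at infinity with a nonzero polynomial part. A chain with outermost link $\exp$ or algebraic would instead make $F_\alpha$ have non-logarithmic singularities at the $\omega_k$, again a contradiction.
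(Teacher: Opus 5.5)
Your route is the same as the paper's. You pass to $u=y^{1/B}$, use Ritt's theorem to force ``one logarithm of an algebraic function, no polynomial part'', use Rosenlicht's rational-residue criterion at the poles $\omega_k$ to get $A\le 2$, and then remove the polynomial part to get $B=1$. You reverse the order of the two steps, and your normalization is better than the paper's. Allowing a free scalar $\lambda$ and comparing ratios $c_k/c_0=\omega_k^B$ is the correct form of the residue condition; the paper demands $c_k\in\mathbb{Q}$ outright, which ignores the algebraic link $x\mapsto\lambda x$. Your residue comparison is sound once $\phi$ is known to be algebraic, because derivatives of algebraic functions have zero residues.

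There is, however, a genuine gap at the step excluding $B\ge 2$. For $A=1$ (and for $A=2$ with $B$ odd) the residue condition holds for every $B$, since $F_{1/B}=P(u)-B\ln(u-1)+\mathrm{const}$ already has the form $\lambda\ln\phi+(\text{algebraic})$. So the infinite families $\alpha=1/B$ and $\alpha=2/B$ are excluded only by the claim that $P(u)+\lambda\ln\phi(u)$ with nonconstant $P$ has no elementary inverse. For $\alpha=1/2$ this is exactly the statement that Lambert $W$ is not elementary, which is a real theorem, and your arguments do not prove it:

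\begin{itemize}
\item Being ``a sum rather than a composition'' is a property of one expression, not of the function. In fact $P+\lambda\ln\phi=\lambda\ln(\phi\,e^{P/\lambda})$ is literally $(x\mapsto\lambda x)\circ\ln\circ(\cdots)$, and whether $\phi e^{P/\lambda}$ admits a chain is precisely the question.
\item The alternation clause only says the link directly beneath $\ln$ is algebraic in its own input. It does not say the argument of $\ln$ is algebraic in $u$.
\item Your behaviour-at-infinity argument silently assumes that argument is algebraic. Otherwise $u=\ln(e^u)$ already has outermost link $\ln$ and a non-decaying derivative.
\item Your claim about chains whose outermost link is $\exp$ or algebraic fails because chains are not unique. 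For example $\ln u=\exp(\ln(\ln u))$, and $F_1=-\ln(1-u)$ has outermost link $x\mapsto -x$ yet a logarithmic singularity.
\item The ``standard Liouville-type step'' that yields a single logarithm of an algebraic $\phi$ is likewise only asserted.
\end{itemize}

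What is needed is an actual Liouville--Rosenlicht argument, as in the proof that $W$ is not elementary. It should treat $u(t)$ as a solution of $\dot u=C(1-u^A)/(Bu^{B-1})$ over $\mathbb{C}(t)$, rather than read Ritt's chain syntactically. The paper's proof has exactly the same hole: it asserts that $Q(u)\exp(P(u))$ ``cannot be reduced'' to a chain and that the argument of the logarithm must be algebraic. So your proposal is no less rigorous than the published argument, but the patch in your final paragraph does not close the gap.
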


\begin{pf}
Separation of variables establishes $Ct = F_{A/B}(y)$, where $F_{A/B}$ is given by \eqref{eq:decomp_complex} with $u = \phi_1(y) = y^{1/B} \in \mathcal{A}$. For $y(t) = F_{A/B}^{-1}(Ct)$ to be elementary, $F_{A/B}(u)$ must possess an elementary inverse.

First, consider the polynomial component $P(u) = -\sum_{k=1}^{\lfloor (B-1)/A \rfloor} \frac{B u^{B-kA}}{B-kA}$. If $P(u)$ is non-constant, exponentiating $F_{A/B}(u)$ yields an expression $Q(u) \exp(P(u))$ with algebraic $Q(u)$.
According to Theorem~\ref{thm:ritt}, the functional inverse of such an expression cannot be elementary. The product of a non-constant algebraic function $Q(u)$ and an exponential $\exp(P(u))$ fundamentally constitutes a binary algebraic combination, which cannot be reduced to the strict single-variable compositional chain required by Ritt's theorem. Hence, the polynomial component must be absent,
\begin{equation}\label{eq:ritt_deg}
    \left\lfloor \frac{B - 1}{A} \right\rfloor = 0 \implies B \le A.
\end{equation}

Second, we analyze the logarithmic terms. We combine them into a~single logarithm:
\begin{equation}
    \sum_{k=0}^{A-1} c_k \operatorname{Log}(u - \omega_k) = \operatorname{Log}  \prod_{k=0}^{A-1} (u - \omega_k)^{c_k}  = \operatorname{Log}(\phi_2(u)),
\end{equation}
where $\phi_2(u) := \prod_{k=0}^{A-1} (u - \omega_k)^{-(B/A)\omega_k^B}$. By the structural constraints of Theorem~\ref{thm:ritt}, the argument of the transcendental logarithm function must be an algebraic function, establishing $\phi_2 \in \mathcal{A}$.

Taking the logarithmic derivative of $\phi_2(u)$ yields $\frac{\phi_2'(u)}{\phi_2(u)} = -\frac{B}{A} \sum_{k=0}^{A-1} \frac{\omega_k^B}{u - \omega_k}$. By Lemma~\ref{lem:rosenlicht}, the residue at each simple pole $\omega_k$ must be a~rational number:
\begin{equation}
    \operatorname{Res}_{u = \omega_k} \frac{\phi_2'(u)}{\phi_2(u)} = -\frac{B}{A} \omega_k^B \in \mathbb{Q}, \quad \forall k \in \{0, 1, \dots, A-1\}.
\end{equation}
This strictly requires $\omega_k^B \in \mathbb{Q}$ for all $k$. Because $\gcd(A, B) = 1$, the values taken by the sequence $\omega_k^B$ span all the $A$-th roots of unity. The only rational roots of unity are $\pm 1$, thus all $A$-th roots of unity are rational if and only if $A \le 2$. 

Combining this with condition \eqref{eq:ritt_deg} leaves exactly two cases:
\begin{itemize}
    \item If $A = 1$: $B \le 1 \implies B = 1$, giving $\alpha = 1$.
    \item If $A = 2$: $B \le 2$. Since $\gcd(2, B) = 1$, $B = 1$, giving $\alpha = 2$.
\end{itemize}
For $\alpha = 1$, $\int \frac{\mathrm{d}y}{1-y} = -\ln(1-y) = Ct$, yielding \eqref{eq:sol_1}. For $\alpha = 2$, $\int \frac{\mathrm{d}y}{1-y^2} = \operatorname{artanh}(y) = Ct$, yielding \eqref{eq:sol_2}.
\end{pf}

\subsection{Extension to the Lambert $W$ Function}

We broaden the framework to include the Lambert $W$ function, the principal inverse of $w \mapsto w \exp(w)$ \cite{Corless1996}.

\begin{theorem}[Completeness Under Lambert $W$ Inversion]\label{thm:lambert_completeness}
Let $\alpha = A/B \in \mathbb{Q} \setminus \{0\}$ with $\gcd(A, B) = 1$. The solution $y(t)$ to \eqref{eq:ode_main} is expressible in terms of elementary functions and the Lambert $W$ function if and only if $\alpha \in \left\{ -1, \, \frac{1}{2}, \, 1, \, 2 \right\}$. The two non-elementary solutions are:
\begin{align}
    \alpha = -1: \quad & y(t) = 1 + W_0\left( -\frac{1}{\mathrm{e}} \exp(Ct) \right), \label{eq:sol_minus1} \\
    \alpha = \frac{1}{2}: \quad & y(t) = \left( 1 + W_0\left( -\frac{1}{\mathrm{e}} \exp\left(-\frac{1}{2}Ct\right) \right) \right)^2. \label{eq:sol_half}
\end{align}
\end{theorem}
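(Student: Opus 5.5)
The proof splits into a sufficiency part and a necessity part, and I would do the explicit cases first.

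\textbf{Sufficiency.} For $\alpha = -1$ the Remark gives
\[
F_{-1}(y) = y - F_1(y) = y + \ln(1-y).
\]
Setting $w := y - 1$, the relation $y + \ln(1-y) = Ct$ becomes $w + \ln(-w) = Ct - 1$. Exponentiating gives $(-w)e^{w} = e^{Ct-1}$, that is, $w e^w = -\mathrm{e}^{-1}\exp(Ct)$. This has the form $w\exp(w) = \text{const}\cdot \exp(Ct)$, which is inverted by $W$, and it yields \eqref{eq:sol_minus1}. The branch is fixed by $w \in [-1,0)$, which corresponds to $y \in [0,1)$.

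For $\alpha = 1/2$, Proposition~\ref{prop:cyclotomic} with $A=1$, $B=2$ and $u = \sqrt{y}$ gives
\[
F_{1/2} = -2u - 2\ln(1-u).
\]
The same substitution $w = u - 1$ now produces $w e^{w} = -\mathrm{e}^{-1}\exp(-Ct/2)$, and squaring $u$ recovers \eqref{eq:sol_half}. I would check the branch choice $W_0$ and the initial condition at $t=0$ directly. The cases $\alpha \in \{1,2\}$ are already covered by Theorem~\ref{thm:elementary_completeness}.

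\textbf{Necessity.} Here I would reuse the residue argument of Theorem~\ref{thm:elementary_completeness}, extended to a Lambert-type version of Ritt's theorem. The only new admissible building block is the inverse of an expression of the form
\[
\text{algebraic} + \ln(\text{algebraic}), \quad \text{equivalently} \quad Q(u)\exp(P(u)),
\]
where the polynomial part is nonconstant. The strategy is to show that adjoining $W$ relaxes only the \emph{polynomial} obstruction (condition \eqref{eq:ritt_deg}) and not the \emph{logarithmic} one.

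The residue criterion still applies because the log part must collapse to $\ln\phi_2$ with $\phi_2$ algebraic. By Lemma~\ref{lem:rosenlicht} this forces $A \le 2$, and for negative $\alpha$ the same holds for $|A|$ by the Remark. The polynomial part may now be nonzero. However, $W$ inverts only $w e^w$, so after a Möbius or affine normalization the relation must reduce to
\[
\text{linear}(v) + c\ln(\text{linear}(v)) = \text{affine}(t)
\]
for a single algebraic variable $v$. This means the polynomial part must have degree at most one in $u$, i.e.\ $\lfloor (B-1)/A\rfloor \le 1$ with the surviving monomial $u^{B-A}$ linear, and there must be a single log term with rational coefficient.

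Enumerating the cases: for $A=1$ the terms $u^{B-k}$ must all have degree $\le 1$, which forces $B \le 2$ and gives $\alpha \in \{1, 1/2\}$. For $A = 2$ there are two logarithms, $\ln(1-u)$ and $\ln(1+u)$, with coefficients $\mp B/2$ and $(-1)^{B}$ signs. Any nonzero polynomial part combined with both logs gives $u + a\ln(1-u) + b\ln(1+u)$, which is not of Lambert form unless one coefficient vanishes, and that is impossible. So $B=1$ and $\alpha = 2$. The negative $\alpha$ are handled through $F_{-\beta} = y - F_\beta$: the extra linear term $y = u^B$ is admissible only when $F_\beta$ is pure log in $u$ and $u^B$ is linear, which forces $\beta = 1$ and hence $\alpha = -1. $ For $\beta = 2$ we get $y - \operatorname{artanh} y$, which has two logs plus a linear term and is excluded as above.

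\textbf{Main obstacle.} The hardest step is rigorously justifying the Lambert analogue of Theorem~\ref{thm:ritt}: that an expression $u + a\ln(u-1) + b\ln(u+1)$ with $ab \neq 0$, or with a higher-degree polynomial part, has no inverse in the field generated by elementary functions and $W$. I would first try to invoke the known structure theorem for solvability in terms of Lambert $W$ (Bronstein--Corless--Davenport--Jeffrey). According to that theorem, $W$-expressibility of an inverse forces the relation into the form $\ln(\text{alg}) + \text{alg} = \text{const}$ with a single logarithmic generator. The argument then closes by counting independent logarithms: the residues $\omega_k^B$ must span a one-dimensional $\mathbb{Q}$-space supported at a single pole.
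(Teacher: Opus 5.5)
Your sufficiency computations for $\alpha=-1$ and $\alpha=1/2$ are correct. Your residue step giving $A\le 2$ is a legitimate alternative to the paper's exponent-sum argument, once one grants the normal form $\mu v+c\ln v=\mathrm{affine}(t)$ with $v$ algebraic in $u$. The gap is the next inference: from ``a single algebraic variable $v$'' you conclude that the polynomial part has degree at most one in $u$ and that the logarithm sits at a single pole. Neither follows, because $v$ need not be linear or M\"{o}bius in $u$. For instance, $-3u+\frac{3}{2}u^2-3\ln(1-u)=\frac{3}{2}v-\frac{3}{2}\ln v-\frac{3}{2}$ with $v=(1-u)^2$ is $W$-invertible. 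Yet $F_{1/3}=-3u-\frac{3}{2}u^2-3\ln(1-u)$ has exactly the same shape and is not. Similarly, $\frac{u-1}{u+1}+\ln\frac{u-1}{u+1}$ is $W$-invertible with logarithmic residues at two poles. A degree count cannot tell such cases apart, so your exclusion of $\alpha=1/B$ ($B\ge 3$) and $\alpha=-1/B$ ($B\ge 2$) is unproved. The case $A=2$ fails similarly. Since $B$ is odd, the two logarithms always combine into the single term $\frac{B}{2}\ln\frac{1+u}{1-u}$, so ``two logarithms with nonzero coefficients'' is no obstruction at all. In particular, $\alpha=2/3$, with $F_{2/3}=-3u+3\operatorname{artanh}u$, passes your degree filter and is then dismissed by an invalid argument.

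What is missing is the coefficient-level matching that the paper carries out. Comparing logarithmic parts forces $v=\kappa f^{b}$ with $f=\prod_k(u-\omega_k)^{c_k}$. Hence $f^{b}$ must be an affine function of the polynomial part $g$, and so itself a polynomial. For $A=2$ the exponents of $f$ sum to zero, so $f^{b}$ is never a nonconstant polynomial. Thus $g\equiv 0$, which forces $B=1$ and $\alpha>0$ (for $\alpha<0$ the term $u^B$ coming from $F_{-\beta}=y-F_\beta$ survives). For $A=1$ one needs $g(u)-g(1)=\lambda(1-u)^n$; the paper matches degrees ($n=B-1$ for $r=+1$, $n=B$ for $r=-1$) and checks the resulting identity. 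Equivalently, $g'$ may not vanish away from $u=1$. But $g'(u)=-B\frac{1-u^{B-1}}{1-u}$ for $\alpha=1/B$ and $g'(u)=B\frac{1-u^{B}}{1-u}$ for $\alpha=-1/B$ vanish at nontrivial roots of unity, forcing $B\le 2$ and $B=1$ respectively. Regarding your ``main obstacle'', the paper does not prove a Lambert analogue of Ritt's theorem either. It simply posits the ansatz $\Phi(z)=az^b$, which amounts to the same normal form you assume, so the decisive content of its proof is precisely this matching computation.
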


\begin{pf}
Let $|\alpha| = A/B$ with $r = \operatorname{sgn}(\alpha)$. Setting $u = y^{1/B}$, exponentiating the primitive gives $\exp(F_\alpha(u)) = f(u) \exp(g(u)) = \exp(Ct)$, where $g(u)$ is a~polynomial and $f(u) = \prod_{k=0}^{A-1} (u - \omega_k)^{-r \frac{B}{A} \omega_k^B}$.

Solvability via the Lambert $W$ function requires a~change of variables $\Phi(z) = az^b$ such that $\Phi(f(u) \exp(g(u))) = \psi(u) \exp(\psi(u))$. This implies $[f(u)]^b = b g(u) + \ln a$. Since the right-hand side is a~polynomial, $[f(u)]^b$ must also be a~polynomial. Because the exponents of $[f(u)]^b$ sum to $-b r B \, \delta_{A, 1}$, a~necessary condition for polynomiality is $A = 1$. When $A = 1$, this polynomial matching condition simplifies to
\begin{equation}\label{eq:A1_identity}
    (u - 1)^{-b r B} = b u^B \mathbb{1}_{\{r < 0\}} - b r \sum_{k=1}^{B-1} \frac{B u^{B-k}}{B-k} + \ln a.
\end{equation}
Defining integer $n := -b r B$, we obtain 
\begin{equation}(u - 1)^n = -\frac{r n}{B} u^B \mathbb{1}_{\{r < 0\}} + \sum_{k=1}^{B-1} \frac{n u^{B-k}}{B-k} + (-1)^n.\end{equation}
For $\alpha > 0$ ($r = +1$), matching degrees yields $n = B - 1$, which exclusively produces a~valid polynomial identity at $B = 2$ ($n = 1$), yielding $\alpha = 1/2$. For $\alpha < 0$ ($r = -1$), matching degrees requires $B = n$, which is exclusively valid at $B = 1$ ($n=1$), yielding $\alpha = -1$.
\end{pf}

\subsection{Limiting Behavior for $\alpha \to 0$}

\begin{theorem}[Convergence to the Logarithmic Integral]\label{thm:log_limit}
As $\alpha \to 0$, the initial value problem \eqref{eq:ode_main} converges pointwise for all $y \in (0, 1)$ to the singular autonomous equation $\dot{y} = -C \ln y$. Its exact solution is given by $y(t) = \operatorname{li}^{-1}(-Ct)$, where $\operatorname{li}^{-1}$ denotes the inverse of the logarithmic integral $\operatorname{li}(y) = \int_0^y \frac{\mathrm{d}s}{\ln s}$, which is uniquely defined because $\operatorname{li}(y)$ is strictly decreasing on the physical domain $y \in (0, 1)$ \cite{NIST:DLMF}.
\end{theorem}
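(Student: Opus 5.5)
The plan is to first make precise the sense of the limit, then integrate the limiting equation by separation of variables as in \eqref{eq:primitive_integral}, and finally check that the logarithmic integral is invertible on the physical range.

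\textbf{Normalizing the limit.} Since $1-y^\alpha\to 0$ as $\alpha\to0$, the limit only makes sense with the rate constant normalized, $C=C_\alpha$ with $\alpha C_\alpha\to C$ (equivalently, time rescaled by $\alpha$). I will state this normalization explicitly at the start. With $x:=-\alpha\ln y>0$ for $y\in(0,1)$, write
\begin{equation*}
\frac{1-y^\alpha}{\alpha}=(-\ln y)\,\frac{1-e^{-x}}{x}.
\end{equation*}
Because $(1-e^{-x})/x\to1$ as $x\to0$, the normalized right-hand side converges pointwise to $-C\ln y$ on $(0,1)$. This gives the claimed limiting equation $\dot y=-C\ln y$.

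\textbf{Convergence of the primitives.} To pass from the equations to the solutions, I will show that $\alpha F_\alpha(y)\to\int_0^y \mathrm{d}s/(-\ln s)$. The function $x\mapsto x/(1-e^{-x})$ is increasing in $x$, and $x=-\alpha\ln s$ decreases as $\alpha\downarrow0$. Hence
\begin{equation*}
\frac{\alpha}{1-s^\alpha}=\frac{1}{-\ln s}\cdot\frac{x}{1-e^{-x}}
\end{equation*}
decreases monotonically to $1/(-\ln s)$. That limit is bounded on $[0,y]$ for $y<1$, and it tends to $0$ as $s\to0^+$. Monotone or dominated convergence therefore gives convergence of the normalized primitives, i.e.\ $\alpha F_\alpha(y)\to-\operatorname{li}(y)$. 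Each $\alpha F_\alpha$ is strictly increasing and the limit is continuous and strictly monotone. So the inverse functions also converge pointwise, which is the standard fact that pointwise convergence of monotone functions to a continuous, strictly monotone limit passes to the inverses. This yields convergence of $y_\alpha(t)$ to the limiting solution.

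\textbf{Solving the limiting problem.} Separation gives $\int_0^y \mathrm{d}s/(-\ln s)=Ct$, that is $\operatorname{li}(y)=-Ct$. The following points make this well posed:
\begin{itemize}
\item $\operatorname{li}(0)=0$ is well defined, because $1/\ln s\to0$ as $s\to0^+$, so the integrand is continuous on $[0,y]$ for $y<1$.
\item $\operatorname{li}'(y)=1/\ln y<0$ on $(0,1)$, so $\operatorname{li}$ is strictly decreasing there.
\item $\operatorname{li}(y)\to-\infty$ as $y\to1^-$, since $1/\ln s\sim -1/(1-s)$.
\end{itemize}
Hence $\operatorname{li}$ is a bijection from $[0,1)$ onto $(-\infty,0]$, and $y(t)=\operatorname{li}^{-1}(-Ct)$ is defined for all $t\ge0$.

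\textbf{Main obstacle.} The main subtlety is uniqueness at the singular initial point $y=0$. There $-C\ln y=+\infty$, so the vector field is not Lipschitz and standard existence–uniqueness theorems do not apply. I plan to argue directly that any solution must leave $0$ instantly and then satisfy $\dot y>0$. After that, separation of variables is justified on $(0,1)$, and the constraint $y(0)=0$ fixes the constant through $\operatorname{li}(0)=0$.
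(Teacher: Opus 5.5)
Your proposal is correct, and its backbone matches the paper's proof: take the limit in the right-hand side, then separate variables to get $\operatorname{li}(y)=-Ct$. The differences are all places where you go beyond the paper.

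\begin{itemize}
\item \textbf{Normalization.} The paper's l'H\^opital step silently divides by $\alpha$. Read literally, $C(1-y^\alpha)\to 0$, so your explicit normalization $\alpha C_\alpha\to C$ is what makes the statement true.
\item \textbf{Convergence of solutions.} The paper only shows that the vector field converges and then solves the limiting equation; it never shows that the solutions $y_\alpha(t)$ converge to $\operatorname{li}^{-1}(-Ct)$. Your monotone-convergence argument for $\alpha F_\alpha\to-\operatorname{li}$, combined with convergence of inverses of monotone functions, supplies exactly that. This is stronger than the theorem claims, and it is the physically meaningful statement.
\item \textbf{Range of $\operatorname{li}$.} The paper justifies the inverse only by strict monotonicity. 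Your check that $\operatorname{li}$ maps $[0,1)$ onto $(-\infty,0]$ is what guarantees $y(t)$ exists for every $t\ge 0$.
\end{itemize}

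Two small tightenings are worth making.

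\begin{itemize}
\item For a decreasing family, the hypothesis you need is integrability of one member, not boundedness of the limit. For example, $\alpha_0/(1-s^{\alpha_0})\le \alpha_0/(1-y^{\alpha_0})$ on $[0,y]$.
\item The uniqueness question at $y=0$, which the paper ignores, closes in one line. For $t>0$, any solution with values in $(0,1)$ satisfies $\frac{\mathrm{d}}{\mathrm{d}t}\operatorname{li}(y(t))=\frac{1}{\ln y}\cdot(-C\ln y)=-C$. Continuity of $\operatorname{li}$ at $0$ then forces the integration constant to vanish.
\end{itemize}
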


\begin{pf}
Applying l'H\^opital's rule to the right-hand side of \eqref{eq:ode_main} with respect to $\alpha$ yields
\begin{equation}
    \lim_{\alpha \to 0} \frac{1 - y^\alpha}{\alpha} = -\lim_{\alpha \to 0} \frac{\exp(\alpha \ln y) - 1}{\alpha} = -\ln y.
\end{equation}
Separating variables in this limiting differential equation yields $\int_0^y \frac{\mathrm{d}s}{\ln s} = -Ct$, which perfectly matches the definition of $\operatorname{li}(y)$ on the domain $y \in (0, 1)$.
\end{pf}

\section{Conclusion}

This paper provides a~complete characterization of the analytical invertibility of the nonlinear relaxation model $\dot{y} = C(1 - y^\alpha)$. By applying Ritt's theorem and Rosenlicht's differential algebraic residue criterion to the cyclotomic decomposition, we proved that elementary explicit solutions exist strictly for $\alpha \in \{1, 2\}$. Extending the functional framework using the Lambert $W$ function introduces closed-form inverses for $\alpha \in \{-1, 1/2\}$. As $\alpha \to 0$, the solution space is completed by the logarithmic integral $\operatorname{li}(y)$. This resolves the question of closed-form solvability for this family of equations.

From an applied perspective, identifying these strict algebraic boundaries serves two distinct purposes. First, these exact functional forms provide zero-error, explicit ground-truth solutions for validating numerical integration schemes and asymptotic approximations used in porous media flow simulators \cite{Hairer1993}. Second, for real gas systems operating near these specific polytropic indices, these exact step responses enable the direct algebraic isolation of the lumped permeability parameter $C$ from transient downstream pressure measurements, bypassing the need for computationally expensive non-linear regression over implicit integral representations.

\section*{Declaration of Generative AI in the manuscript preparation process}
During the preparation of this work, \texttt{Gemini 3.8 Flash} was used to improve the language, stylistics, and readability of the manuscript. After using this tool, the author reviewed, edited, and verified the text as needed and takes full responsibility for the final content of the published article.
\printcredits


\end{document}